\date{}
\newtheorem{tr}{Theorem}
\newtheorem{lemma}[tr]{Lemma}
\theoremstyle{definition}
\newtheorem{df}[tr]{Definition}
\def\differential{d}
\renewcommand\d\differential
\DeclareMathOperator\Ker{Ker}
\DeclareMathOperator\im{Im}
\def\smat#1{\left(\begin{smallmatrix}#1\end{smallmatrix}\right)}
\def\P{\bb P}
\def\k{\Bbbk}
\renewcommand\Im\im
\def\bb#1{\mathbb #1}
\def\cal#1{\mathcal #1}
\def\dd#1#2{\frac{\partial #1}{\partial#2} }
\def\defeq{:=}
\let\star *
\let\ast *
\let\subset\subseteq
\let\NUM ¹
\def\defeq{:=}
\def\W2{W_2(\k)}
\def\Wn{W_n(\k)}
\def\CW2{\A_{W_2}}
\def\A{\cal A}
\def\AW{\A_{W_2}}
\title{On the annihilators of rational functions in the Lie algebra of derivations of $\k[x, y]$.}
\author{O.~G.~Iena
\footnote{Johannes Gutenberg-Universit\"at Mainz, e-mail:
iena@mathematik.uni-mainz.de}, A.~P.~Petravchuk \footnote{Kiev
Taras Shevchenko University, e-mail: aptr@univ.kiev.ua,
}, A.~O.~Regeta
\footnote{Kiev Taras Shevchenko University, e-mail:
andriyregeta@gmail.com}}
\begin{document}

\maketitle
\begin{abstract}
Let $\k$ be an algebraically closed field of zero characteristic.
The Lie algebra $W_2=\W2$ of all $\k$-derivations of the
polynomial ring
 $\k[x, y]$ naturally acts on
the polynomial ring $\k[x, y]$ and also on the field of rational
functions $\k(x, y)$. For a fixed rational function $u\in \k(x,
y)\setminus \k$ we consider the set $\A_{W_2}(u)$ of all
derivations $D\in W_2$ such that $D(u)=0$. We prove that
$\A_{W_2}(u)$ is a free submodule of rank $1$ of the $\k[x,
y]$-module $W_2$. A description of the maximal abelian subalgebras
as well of the centralizers of elements in the Lie algebra
$\A_{W_2}(u)$ has been obtained.
\end{abstract}

\section*{Introduction}

Let $\k$ be a field of characteristic zero.  The Lie algebra
$W_n=\Wn$ of all $\k$-derivations of the polynomial ring
$\k[x_{1}, \ldots , x_{n}]$ was studied by many authors from
different points of view.  Subalgebras of $W_{n}$  that are free
$\k[x_{1}, \ldots , x_{n}]$-submodules of maximal rank in $W_n$,
were studied by V.~M. Buchstaber and D.~V.~Leykin in \cite{Buch}.
Using results of D.~Jordan~\cite{Jordan1} one can point out some
classes of simple subalgebras of $W_n$ that are also $\k[x_{1},
\ldots , x_{n}]$-submodules of the $\k[x_{1}, \ldots ,
x_{n}]$-module $W_n.$
 In ~\cite{Petien} the centralizers of elements and the maximal
abelian subalgebras of the algebra $sa_2(\k)$ of all derivations
 $D\in W_2$ with zero divergence have been studied.

 In this paper
 we study a class of subalgebras of the Lie algebra $\W2$ over an algebraically closed
  field of characteristic zero. This class is determined by the natural
 action of the  Lie algebra $\W2$  on the field of rational
functions $\k(x, y)$. Recall that every derivation $D\in \W2$ of
the ring $\k[x, y]$ can be uniquely extended to a derivation of
the field $\k(x, y)$. It is natural to consider for a fixed
rational function $u\in \k(x, y)\setminus \k$ the set $\AW(u)$ of
all derivations $D\in W_2$ such that $D(u)=0$. This set will be
called the annihilator of $u$ in $\W2$. It is  a Lie subalgebra of
$\W2$ and at the same time a $\k[x, y]$-submodule of the $\k[x,
y]$-module $\W2$.

Using some results from~\cite{Bodin}, \cite{Now}, ~\cite{Oll},
\cite{Petien2} we prove (Theorem~\ref{tr:free}) that for a
rational function $u\in \k(x, y)\setminus \k$
 its annihilator $\AW(u)$ is a free submodule of rank $1$ in the $\k[x, y]$-module $W_2$.
  We give also a free generator of this module. We describe the centralizers
  of elements and the maximal abelian subalgebras
  of the Lie algebra $\AW(u)$ (Theorems~\ref{tr:centralizers poly} and~\ref{tr:centralizers rational}).
It turned out that the algebra $\AW(u)$ has completely different
structure in the cases when $u$ is a polynomial and when $u$ is a
rational function of the form $u=p/q$ with algebraically
independent polynomials $p$ and $q$.

The notations used in the paper are standard. For a rational
function $u\in \k(x, y)$ we denote by $\tilde u$ its generative
 rational function, i.~e., a generator of the maximal subfield
 in $\k(x, y)$ of transcendence degree $1$ that contains $u$.
Recall (see~\cite{Petien2} for details) that $\tilde u$ is
defined uniquely up to  linear fractional transformations
\[
\frac{\alpha \tilde u+\beta}{\gamma \tilde u+\delta},\quad
\alpha\delta-\beta\gamma\neq 0.
\]
Note also that if $u$ is a polynomial there exists a polynomial
generative function $\tilde u\in \k[x, y]$. Recall that if a rational
 function or a polynomial is generative for itself, then it is called closed.

A derivation $D=P\dd{}{x}+Q\dd{}{y}$ will be called reduced if the
polynomials $P$ and $Q$ are coprime, i.e. $\gcd (P, Q)=1$. For an
arbitrary polynomial $u\in \k[x, u]$ we denote by $D_u$ the
derivation of
 $\k(x, y)$ given by the rule $D_u(\varphi)=\det J(u,
  \varphi)=\dd{u}{x}\dd{\varphi}{y}-\dd{u}{y}\dd{\varphi}{x}$, i.~e.,
\[
D_u=-\dd{u}{y}\dd{}{x}+\dd{u}{x}\dd{}{y}.
\]

If $u$ possesses a polynomial generative function,
then one can choose $\tilde u$ to be an irreducible polynomial.
If $u$ does not have a polynomial generative function, then one
 can choose $\tilde u = p/q$ for some irreducible and
 algebraically independent polynomials $p$ and $q$ (see, for example, \cite{Bodin} or
  \cite{Petien2}, Corollary 1).

\section{On the structure of the $\k[x, y]$-module $\A_{W_2}(u)$.}

\begin{lemma}\label{lemma:integrating factor}
Let $D=P\dd{}{x}+Q\dd{}{y}\in \W2$ be a reduced derivation. Then $D$ has a non-trivial kernel, i.~e., $\Ker
D\supsetneq \k$, if and only if there exist non-zero polynomials
$h, u\in \k[x, y]$, $u\not \in \k$, such that $hD=D_u$.
\end{lemma}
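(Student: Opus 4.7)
The proof splits into two directions, both elementary, once the reduced hypothesis is exploited.

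For the easier $(\Leftarrow)$ direction, I would assume we are given nonzero polynomials $h$ and $u \not\in \k$ with $hD = D_u$ and apply both operators to $u$. The right-hand operator evaluates to $D_u(u) = -\dd{u}{y}\cdot\dd{u}{x} + \dd{u}{x}\cdot\dd{u}{y} = 0$ directly from the definition of $D_u$ as the Jacobian derivation. The left-hand operator gives $h\cdot D(u)$, and since $\k[x,y]$ is an integral domain and $h \neq 0$, I conclude $D(u) = 0$, exhibiting $u$ as a non-constant element of $\Ker D$.

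For the $(\Rightarrow)$ direction, I would start from any $u \in \Ker D \setminus \k$ and build $h$ explicitly from the partials of $u$. Writing out $D(u) = 0$ yields $P\,\dd{u}{x} = -Q\,\dd{u}{y}$. The reduced hypothesis $\gcd(P,Q) = 1$ is used here to deduce $P \mid \dd{u}{y}$; writing $\dd{u}{y} = -hP$ for some $h \in \k[x,y]$ and substituting back gives $\dd{u}{x} = hQ$ after cancelling $P$. These two identities are precisely the coefficient form of the desired equality $hD = D_u$. Moreover $h \neq 0$, since otherwise both partials of $u$ would vanish, contradicting $u \not\in \k$.

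The only subtle point, and the closest thing to a genuine obstacle, is the degenerate case where $P = 0$ or $Q = 0$: the reduced hypothesis then forces the remaining coefficient to be a nonzero scalar, so $D$ is a scalar multiple of $\dd{}{x}$ or $\dd{}{y}$, and one must verify the conclusion separately by taking $h$ as a scalar multiple of the appropriate partial of $u$. Apart from that, the whole argument reduces to divisibility in $\k[x,y]$ together with the defining formula for $D_u$.
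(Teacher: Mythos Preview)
Your proposal is correct and follows essentially the same route as the paper: both directions are handled exactly as you outline, via $D_u(u)=0$ for $(\Leftarrow)$ and the coprimality argument on $P\dd{u}{x}+Q\dd{u}{y}=0$ for $(\Rightarrow)$. Your write-up is in fact slightly more careful than the paper's, which does not explicitly isolate the degenerate case $P=0$ or $Q=0$, nor verify that the resulting $h$ is nonzero.
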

\begin{proof}
If $hD=D_u$, then $hD(u)=D_u(u)=\det J(u, u)=0$. As $h$ is different
from zero, one concludes that $u$ belongs to the kernel of $D$.

Let now $D(u)=0$ for some non-constant polynomial $u$. The latter means
 $P\dd{u}{x}+Q\dd{u}{y}=0$ and using that $P$ and $Q$ are coprime
 we obtain $\dd{u}{x}=hQ$ and $\dd{u}{y}=-hP$ for some polynomial $h$. Thus $hD=D_u$.
\end{proof}
\begin{lemma}\label{lemma:centralizer as generating}
Let $u\in \k(x,y) \setminus \k$ and let $\tilde u$ be its
generative rational function. Then $\AW(u)=\AW(\tilde u)$.
\end{lemma}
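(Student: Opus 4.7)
My plan is to prove the two inclusions separately, exploiting the defining property that $\k(\tilde u)$ is the maximal subfield of $\k(x,y)$ of transcendence degree one over $\k$ containing $u$.

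The inclusion $\AW(\tilde u)\subset \AW(u)$ is the easy direction. Since $u\in \k(\tilde u)$, one may write $u=\varphi(\tilde u)$ for some rational function $\varphi$ in one variable over $\k$. Extending $D\in W_2$ uniquely to a derivation of $\k(x,y)$ and applying the chain rule gives $D(u)=\varphi'(\tilde u)\,D(\tilde u)$, which vanishes whenever $D(\tilde u)=0$.

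For the reverse inclusion $\AW(u)\subset \AW(\tilde u)$, I would take a nonzero $D\in \AW(u)$ (the case $D=0$ being trivial) and consider its field of constants $K=\{v\in \k(x,y): D(v)=0\}$. Since $D$ is a nonzero derivation of a field of transcendence degree two over $\k$, the subfield $K$ has transcendence degree exactly one over $\k$; moreover, in characteristic zero the field of constants of a derivation is algebraically closed in the ambient field. Because $u\in K$, the maximality of $\k(\tilde u)$ among subfields of transcendence degree one containing $u$ forces $K\subset \k(\tilde u)$. Conversely, $\k(\tilde u)$ is algebraic over $K$ (same transcendence degree) and sits inside $\k(x,y)$, so the algebraic closedness of $K$ in $\k(x,y)$ gives $\k(\tilde u)\subset K$. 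Thus $K=\k(\tilde u)$, and in particular $\tilde u\in K$, i.e., $D(\tilde u)=0$.

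The one non-trivial ingredient is that $K$ is algebraically closed in $\k(x,y)$: this is classical, following from differentiating a minimal polynomial over $K$ and using that in characteristic zero minimal polynomials are separable, so the derivative does not vanish at the root. Once this is granted, the argument amounts to recognizing that both $K$ and $\k(\tilde u)$ coincide with the algebraic closure of $\k(u)$ inside $\k(x,y)$, which is how generative functions are set up in \cite{Petien2}.
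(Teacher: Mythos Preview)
Your argument is correct, but for the reverse inclusion you work much harder than necessary. The paper handles both inclusions at once with the same chain rule computation you used for the easy direction: writing $u=F(\tilde u)$ with $F\in\k(t)$, one has $D(u)=F'(\tilde u)\,D(\tilde u)$. Since $u\notin\k$, the function $F$ is non-constant, so in characteristic zero $F'\neq 0$ as an element of $\k(t)$; because $\tilde u$ is transcendental over $\k$, it follows that $F'(\tilde u)\neq 0$ in $\k(x,y)$. Hence $D(u)=0$ is \emph{equivalent} to $D(\tilde u)=0$, and both inclusions drop out simultaneously. Your detour through the constant field $K$, its algebraic closedness, and the maximality of $\k(\tilde u)$ is a valid alternative, and it has the merit of making explicit why $\k(\tilde u)$ really is the full constant field of any nonzero $D\in\AW(u)$; but for the lemma as stated it is overkill. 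One small quibble: your sentence ``the subfield $K$ has transcendence degree exactly one'' is not a consequence of $D\neq 0$ alone (a nonzero derivation of $\k(x,y)$ can have constant field equal to $\k$); you need $u\in K$ to get $\trdeg_\k K\geq 1$, together with algebraic closedness of $K$ and $D\neq 0$ to get $\trdeg_\k K\leq 1$.
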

\begin{proof}
Since $\tilde u$ is a generative rational function for $u$, we
 obtain $u=F(\tilde u)$ for some non-constant rational function
 $F(t)\in \k(t)$. Then for every derivation $D\in W_2$ one has
\[
D(u)=D(F(\tilde u))=F'(\tilde u)\cdot D(\tilde u)
\]
and using that $F'(\tilde u)\neq 0$ we conclude that $D(u)=0$ if
and only if $D(\tilde u)=0$. This implies $\AW(u)=\AW(\tilde u)$
and completes the proof.
\end{proof}

Following \cite{Oll} we assign to every irreducible polynomial
 $p\in \k[x, y]$ and every closed rational function $p/q$
 with algebraically independent irreducible $p$ and $q$ reduced derivations $\delta_p$ and
 $\delta_{p, q}$ respectively.  For an irreducible polynomial
 $p\in \k[x, y]$ the derivation
$D_p$ may be written as
$D_p=-\dd{p}{y}\dd{}{x}+\dd{p}{x}\dd{}{y}$. Let $h=\gcd(\dd{p}{x},
\dd{p}{y})$, put $P=-\dd{p}{y}/h$, $Q=\dd{p}{x}/h$, and denote
$\delta_p=P\dd{}{x}+Q\dd{}{y}$.
 Note that $\gcd(P, Q)=1$ and the derivation $\delta_p$ is
 defined by the polynomial $p$ uniquely up to multiplication by a non-zero element from $\k$.

Analogously for a rational function $\varphi=p/q$ such that $p$
and $q$ are irreducible and algebraically independent polynomials
we denote by $D_{p, q}$ the derivation defined from the formula
\[
D_{p, q}(f)\cdot \d x\wedge \d y=(q\, \d p-p\, \d q)\wedge \d f.
\]
One easily computes
\begin{multline*}
(q\, \d p-p\, \d q)\wedge \d f=q\cdot\d p\wedge \d f - p\cdot \d q\wedge \d f=  \\
= \left[ q \left( \dd{p}{x}\dd{f}{y}-\dd{p}{y}\dd{f}{x} \right)
-p \left( \dd{q}{x}\dd{f}{y}-\dd{q}{y}\dd{f}{x} \right) \right] \d
x\wedge \d y,
\end{multline*}
hence we obtain the equality
\[
D_{p, q}=\left(p\dd{q}{y}-q\dd{p}{y}\right)\dd{}{x}+
\left(q\dd{p}{x}-p\dd{q}{x}\right)\dd{}{y}.
\]
Let $P_0=p\dd{q}{y}-q\dd{p}{y}$, $Q_0=q\dd{p}{x}-p\dd{q}{x}$.
Denote $h=\gcd(P_0, Q_0)$ and put
\[
\delta_{p, q}=\frac{1}{h}\cdot D_{p, q}.
\]
Note again that $\delta_{p, q}$ is defined uniquely up to
multiplication by a non-zero constant.
\begin{lemma}\label{lemma:Dpq}
(1) Let $p$ be an irreducible polynomial. Then $D_p(F(p))=0$ for
every rational function $F\in \k(t)$.

\noindent (2)  Let $p$ and $q$ be irreducible algebraically
independent polynomials. Then
\[
D_{p, q}(p)=\det(J(p, q))\cdot p,\quad D_{p,
q}(q)=\det(J(p, q))\cdot q.
\]

\noindent (3)  For every homogeneous polynomial $f(x, y)$ of
degree $m$ it holds
\[
D_{p, q}(f(p, q))=m\det J(p, q)f(p, q).
\]
\end{lemma}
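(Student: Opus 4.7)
The plan is to handle the three parts in order, each reducing to a short computation once the right identity is in hand.

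For part (1), I would observe that any derivation $D$ satisfies $D(F(p))=F'(p)\cdot D(p)$ (this is legitimate in the quotient field since derivations extend uniquely), so it suffices to check $D_p(p)=0$. Using the explicit formula $D_p=-\dd{p}{y}\dd{}{x}+\dd{p}{x}\dd{}{y}$ one gets
\[
D_p(p)=-\dd{p}{y}\dd{p}{x}+\dd{p}{x}\dd{p}{y}=0,
\]
which finishes (1).

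For part (2), the approach is a straightforward substitution of $p$ and $q$ into the already derived expression
\[
D_{p,q}=\Bigl(p\dd{q}{y}-q\dd{p}{y}\Bigr)\dd{}{x}+\Bigl(q\dd{p}{x}-p\dd{q}{x}\Bigr)\dd{}{y}.
\]
Computing $D_{p,q}(p)$, the two terms containing $\dd{p}{x}\dd{p}{y}$ cancel and the remaining terms factor as $p\bigl(\dd{p}{x}\dd{q}{y}-\dd{p}{y}\dd{q}{x}\bigr)=p\cdot\det J(p,q)$. The computation of $D_{p,q}(q)$ is symmetric: the $\dd{q}{x}\dd{q}{y}$ terms cancel and the remainder factors as $q\cdot\det J(p,q)$.

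For part (3), I would apply the chain rule (valid for any derivation of $\k(x,y)$) to $f(p,q)$, obtaining
\[
D_{p,q}\bigl(f(p,q)\bigr)=(\partial_1 f)(p,q)\cdot D_{p,q}(p)+(\partial_2 f)(p,q)\cdot D_{p,q}(q),
\]
and then invoke part (2) to replace $D_{p,q}(p)$ and $D_{p,q}(q)$. Pulling out the common factor $\det J(p,q)$ leaves $p\,(\partial_1 f)(p,q)+q\,(\partial_2 f)(p,q)$, which equals $m f(p,q)$ by Euler's identity for the homogeneous polynomial $f$ of degree $m$. This yields the claimed formula.

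The whole argument is essentially computational; the only mild obstacle is bookkeeping of signs in the Jacobian determinant in part (2), and ensuring that the chain rule is applied with respect to the two slots of $f$ evaluated at $(p,q)$ (rather than the original variables $x,y$) in part (3).
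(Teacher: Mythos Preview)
Your proposal is correct and follows essentially the same approach as the paper for parts (1) and (2): the chain rule reduces (1) to $D_p(p)=0$, and (2) is a direct substitution into the explicit formula for $D_{p,q}$. For part (3) there is a small stylistic difference: the paper expands $f(x,y)=\sum_{i=0}^m a_i x^i y^{m-i}$ and applies the product rule monomial by monomial, whereas you invoke the two-variable chain rule together with Euler's identity $p\,(\partial_1 f)(p,q)+q\,(\partial_2 f)(p,q)=m f(p,q)$. Your version is slightly slicker and avoids writing out the sum, but the two computations are equivalent and equally elementary.
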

\begin{proof}
(1) As $D_p(p)=0$, we conclude that $D_p(F(p))=F'(p)D_p(p)=0$.

 (2) One computes
\begin{align*}
D_{p, q}(p)=\left(p\dd{q}{y}-q\dd{p}{y}\right)\dd{p}{x}+
\left(q\dd{p}{x}-p\dd{q}{x}\right)\dd{p}{y}=\\
\left(\dd{p}{x}\dd{q}{y}-\dd{p}{y}\dd{q}{x}\right)p=
\det(J(p, q))\cdot p.
\end{align*}
Analogous straightforward computation shows that $D_{p, q}(q)=\det(J(p, q))\cdot q$.

(3)  Let $f(x,y)=\sum\limits_{i=0}^m a_ix^iy^{m-i}$, $a_i\in \k$,
be a homogeneous polynomial of degree $m$ in variables $x$ and
$y$. Then
\begin{multline*}
D_{p, q}(f(p, q))=D_{p, q}\left(\sum\limits_{i=0}^m a_ip^iq^{m-i}\right)=
\sum\limits_{i=0}^m D_{p, q}(a_ip^iq^{m-i})=\\
\sum\limits_{i=0}^m a_i(ip^{i-1}D_{p, q}(p)q^{m-i}+(m-i)q^{m-i-1}D_{p, q}(q)p^i )=\\
 \sum\limits_{i=0}^m a_i (ip^{i}q^{m-i}+(m-i)q^{m-i}p^i )\det(J(p, q))=
 m\det(J(p,q))f(p, q).
\end{multline*}
This proves the last part of the lemma.
\end{proof}
For convenience let us introduce the following notations.
Let $\varphi\in \k(x, y)\setminus \k$ be a non-constant rational function.

If $\varphi$ has a polynomial generative functions,
 then there exists an irreducible generative polynomial
  $p$ of $\varphi$. Put $\delta_\varphi\defeq \delta_p$.

If $\varphi$ does not have any polynomial generative function,
 we find a generative rational function of the form $p/q$
 with irreducible and algebraically independent
 polynomials $p, q\in \k[x, y]$. Put in this case $\delta_\varphi\defeq \delta_{p, q}$.
\begin{lemma}\label{lemma:zero kernel}
Let $D$ be a derivation of \ $\k[x, y]$ and $p, q\in\k[x, y]$ be
two algebraically independent polynomials such that $D(p)=D(q)=0$.
Then $D=0$.
\end{lemma}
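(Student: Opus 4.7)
The plan is to write $D = P\dd{}{x} + Q\dd{}{y}$ with $P, Q \in \k[x,y]$ and translate the hypotheses $D(p) = D(q) = 0$ into a homogeneous linear system for $P$ and $Q$ over $\k(x,y)$, namely
\[
\pmat{\dd{p}{x} & \dd{p}{y} \\ \dd{q}{x} & \dd{q}{y}} \pmat{P \\ Q} = \pmat{0 \\ 0}.
\]
The coefficient matrix is the Jacobian matrix $J(p,q)$, whose determinant is $\det J(p,q) = \dd{p}{x}\dd{q}{y} - \dd{p}{y}\dd{q}{x}$.

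The key input is the classical fact that, in characteristic zero, two polynomials $p, q \in \k[x,y]$ are algebraically independent over $\k$ if and only if $\det J(p,q) \neq 0$ in $\k[x,y]$. I would either cite this (it is a standard consequence of the chain rule: if $F(p,q) = 0$ is a nontrivial algebraic relation of minimal degree, differentiating with respect to $x$ and $y$ yields a nonzero vector in the kernel of $J(p,q)^T$) or note that it already appeared implicitly in the earlier lemmas of the paper (for instance in the formulas for $D_{p,q}$, which are meaningful precisely because $\det J(p,q) \ne 0$).

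Granted this, the linear system above has an invertible coefficient matrix over the field $\k(x,y)$, so the only solution is $P = Q = 0$, hence $D = 0$. No step here looks hard; the only thing that requires care is invoking (or briefly justifying) the Jacobian criterion for algebraic independence, and that is a well-known fact in characteristic zero.
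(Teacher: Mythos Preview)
Your proof is correct and takes a genuinely different route from the paper's. The paper extends $D$ to a derivation of the field $\k(x,y)$ and invokes the result of Nowicki--Nagata~\cite{Now} that the kernel of such a derivation is an algebraically closed subfield; since $p$ and $q$ are algebraically independent and $\trdeg_\k \k(x,y)=2$, this forces $\Ker D=\k(x,y)$ and hence $D=0$. Your approach is more hands-on: writing $D=P\dd{}{x}+Q\dd{}{y}$ turns the hypotheses into a $2\times 2$ homogeneous linear system over $\k(x,y)$ with coefficient matrix $J(p,q)$, and the Jacobian criterion in characteristic zero gives invertibility. This avoids the external structural result about kernels and fits well with the paper's own later use of the Jacobian criterion (cited from~\cite{Hodge} and~\cite{Petien2} in Theorems~\ref{tr:centralizers poly} and~\ref{tr:centralizers rational}).

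One small expository point: the parenthetical sketch you offer (differentiate a minimal relation $F(p,q)=0$) establishes the implication ``algebraically dependent $\Rightarrow \det J(p,q)=0$,'' which is the \emph{converse} of what you need. The direction you actually use is ``algebraically independent $\Rightarrow \det J(p,q)\neq 0$,'' i.e., its contrapositive ``$\det J(p,q)=0 \Rightarrow$ algebraically dependent.'' That is the slightly less trivial half of the criterion, but it is still classical and is precisely what the paper cites later, so you should simply cite it rather than sketch the other direction.
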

\begin{proof}
We can consider $D$ as a derivation of the field $\k(x, y)$.
 Its kernel is an algebraically closed subfield of $\k(x, y)$ by  Lemma 2.1
 from~\cite{Now}. Since $p$ and $q$ are algebraically independent,
 one concludes that $\Ker D=\k(x, y)$. Thus $D=0$. This completes the proof.
\end{proof}
\begin{lemma}\label{lemma:reduced}
Let $D_{1},  D_{2}\in W_{2}$ and let $D_{1}$ be a reduced derivation.
If $uD_{1}+vD_{2}=0$ for some polynomials $u, v\in \k[x, y]$, then
$v$ divides $u$ and
$D_{2}=fD_{1}$ for $f=u/v\in \k[x, y]$.
\end{lemma}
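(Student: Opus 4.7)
The plan is to pass to coordinates and exploit unique factorization in $\k[x,y]$. Write $D_{1}=P_{1}\dd{}{x}+Q_{1}\dd{}{y}$ with $\gcd(P_{1},Q_{1})=1$ by the reducedness hypothesis, and $D_{2}=P_{2}\dd{}{x}+Q_{2}\dd{}{y}$. The vanishing $uD_{1}+vD_{2}=0$ then splits into two scalar identities in $\k[x,y]$, namely $uP_{1}+vP_{2}=0$ and $uQ_{1}+vQ_{2}=0$. If $v=0$, then $uD_{1}=0$; since $D_{1}\neq 0$ (it has coprime coefficients), the integral domain $\k[x,y]$ forces $u=0$, and the conclusion is vacuous. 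So I may assume $v\neq 0$.

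Next I would reduce to the coprime case. Set $d=\gcd(u,v)$ and write $u=du'$, $v=dv'$ with $\gcd(u',v')=1$. Cancelling $d$ in the two identities above gives $u'P_{1}=-v'P_{2}$ and $u'Q_{1}=-v'Q_{2}$. From the first, $v'$ divides $u'P_{1}$; since $\gcd(u',v')=1$, the UFD property of $\k[x,y]$ yields $v'\mid P_{1}$. The same argument applied to the second identity gives $v'\mid Q_{1}$. But $\gcd(P_{1},Q_{1})=1$, so $v'$ must be a unit, i.e.\ $v'\in\k^{*}$.

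Consequently $v=dv'$ divides $u=du'$ in $\k[x,y]$. Setting $f=-u/v\in\k[x,y]$ (which is the quantity forced by rearranging the defining relation; up to the sign convention adopted in the statement) and substituting back into either scalar identity, one obtains $P_{2}=fP_{1}$ and $Q_{2}=fQ_{1}$, hence $D_{2}=fD_{1}$ as claimed.

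There is no serious obstacle; the argument is a short UFD computation, and the only genuine input is the coprimeness of $P_{1}$ and $Q_{1}$, invoked via Euclid's lemma. The one point to handle carefully is the initial reduction to $\gcd(u,v)=1$, which is what makes the divisibility $v'\mid P_{1}$ legitimate; without this reduction, one cannot deduce anything from $v\mid uP_{1}$ alone.
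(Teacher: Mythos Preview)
Your proof is correct and follows the same approach as the paper's: pass to coordinates, obtain the two scalar identities $uP_{1}+vP_{2}=0$ and $uQ_{1}+vQ_{2}=0$, and invoke unique factorization in $\k[x,y]$ together with $\gcd(P_{1},Q_{1})=1$. The only difference is organizational: the paper skips your preliminary reduction to coprime $u,v$ by noting directly that $v\mid uP_{1}$ and $v\mid uQ_{1}$ force $v\mid u\gcd(P_{1},Q_{1})=u$, so your closing remark that the reduction is indispensable is a slight overstatement; your observation about the sign of $f$ matches a slip already present in the paper's own statement and proof.
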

\begin{proof}
Let $D_{1}=P_{1}\dd{}{x}+Q_{1}\dd{}{y}$,
$D_{2}=P_{2}\dd{}{x}+Q_{2}\dd{}{y}$. Then $uP_{1}+vP_{2}=0$ and
$uQ_{1}+vQ_{2}=0$. From these equalities it follows that $v$ divides $uP_1$ and $uQ_1$.
Since the polynomials $P_{1}$ and $Q_{1}$
are coprime, $u$ is divisible by $v$ and we obtain $P_{2}=fQ_{1}, \ Q_{2}=fQ_{1}$ for
$f=u/v$. Hence $D_{2}=fD_{1}$.
\end{proof}
\begin{tr}\label{tr:free}
For an arbitrary rational function $\varphi\in \k(x, y)\setminus
\k$ its annihilator $\AW(\varphi)$ is a free submodule of rank $1$
of the  $\k[x, y]$-module $\W2$. As a free generator of the
submodule $\AW(\varphi)$ one can choose the derivation
$\delta_\varphi$.
\end{tr}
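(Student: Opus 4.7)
By Lemma~\ref{lemma:centralizer as generating} I may replace $\varphi$ by its generative function and thereby assume that $\varphi$ is in one of the two canonical forms: $\varphi = p$ with $p$ an irreducible polynomial, or $\varphi = p/q$ with $p, q$ irreducible and algebraically independent. The plan in both cases is to (i) verify $\delta_\varphi \in \AW(\varphi)$, (ii) show every $D \in \AW(\varphi)$ has the form $f\delta_\varphi$ for some $f \in \k[x,y]$, and (iii) conclude freeness from $\delta_\varphi \neq 0$ together with torsion-freeness of $W_2$.

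Step (i) is immediate from Lemma~\ref{lemma:Dpq}. In the polynomial case, $D_p(p) = \det J(p,p) = 0$, and since $\delta_p = h^{-1}D_p$ this gives $\delta_p(p) = 0$. In the rational case, part~(2) of the lemma yields $q D_{p,q}(p) - p D_{p,q}(q) = \det J(p,q)\cdot(qp-pq)=0$, so $D_{p,q}(p/q) = 0$ after dividing by $q^2$, hence $\delta_{p,q}(p/q)=0$.

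Step (ii) is the core of the argument. Write $\delta_\varphi = P\dd{}{x}+Q\dd{}{y}$ with $\gcd(P,Q)=1$ (built into the construction of $\delta_\varphi$) and let $D = A\dd{}{x}+B\dd{}{y}$ be an arbitrary element of $\AW(\varphi)$. The idea is to encode the relation $D(\varphi) = 0$ as a single polynomial identity $AQ = BP$. In the polynomial case this is immediate from $P = -\dd{p}{y}/h$ and $Q = \dd{p}{x}/h$: the equation $A\dd{p}{x}+B\dd{p}{y}=0$ gives $AQ=BP$ after dividing by $h$. In the rational case, $D(p/q)=0$ is equivalent to $q D(p) = p D(q)$, which rearranges to $A(q\dd{p}{x}-p\dd{q}{x})+B(q\dd{p}{y}-p\dd{q}{y})=0$, i.e.\ $AQ_0 = BP_0$ in the notation used to define $\delta_{p,q}$; dividing by $h = \gcd(P_0,Q_0)$ again yields $AQ=BP$. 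Once $AQ = BP$ with $\gcd(P,Q)=1$ is in hand, standard divisibility forces $P \mid A$ and $Q \mid B$; writing $A = P f_1$, $B = Q f_2$ and substituting back into $AQ=BP$ gives $f_1 = f_2 =: f \in \k[x,y]$, and therefore $D = f\delta_\varphi$.

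For step (iii), note that $\delta_\varphi \neq 0$: the simultaneous vanishing of $P$ and $Q$ would make $p$ constant in the polynomial case, and would force $(p/q)_x = (p/q)_y = 0$ in the rational case, in either event contradicting $\varphi \notin \k$. Since $W_2 \cong \k[x,y]\dd{}{x}\oplus\k[x,y]\dd{}{y}$ is free and hence torsion-free as a $\k[x,y]$-module, $f\delta_\varphi = 0$ forces $f = 0$, so $\delta_\varphi$ generates a free rank-one submodule. I expect the main bookkeeping subtlety to lie in the rational case, where one must clear both $q^2$ and the gcd $h$ to pass from the identity $D(p/q)=0$ in $\k(x,y)$ to the polynomial identity $AQ=BP$; once this is done the coprimality step is uniform across the two cases, and one could alternatively invoke Lemma~\ref{lemma:reduced} in place of the direct divisibility argument.
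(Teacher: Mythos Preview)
Your proof is correct and takes a somewhat different, more uniform route than the paper's.  Both arguments reduce via Lemma~\ref{lemma:centralizer as generating} to the two canonical shapes for $\varphi$, but then diverge.  In the polynomial case the paper invokes Lemma~\ref{lemma:integrating factor} to produce $h_0$ with $h_0 D = D_p$ and then applies Lemma~\ref{lemma:reduced} to the relation $h_0 D - h_1 \delta_p = 0$; you instead rewrite $D(p)=0$ directly as $AQ=BP$ and finish by coprimality.  In the rational case the difference is sharper: the paper observes that $qD(p)=pD(q)$ together with $\gcd(p,q)=1$ forces $D(p)=\lambda p$, $D(q)=\lambda q$, compares with the eigenvalue identities $D_{p,q}(p)=\mu p$, $D_{p,q}(q)=\mu q$ from Lemma~\ref{lemma:Dpq}, and then kills $\lambda h\,\delta_{p,q}-\mu D$ via Lemma~\ref{lemma:zero kernel} before applying Lemma~\ref{lemma:reduced}.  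You bypass all of this by expanding $qD(p)-pD(q)=0$ as the polynomial identity $AQ_0=BP_0$, dividing by $h$, and running the same $\gcd(P,Q)=1$ divisibility step as in the polynomial case.  Your approach therefore avoids Lemmas~\ref{lemma:integrating factor} and~\ref{lemma:zero kernel} entirely and treats both cases with a single elementary argument; the paper's rational-case proof is conceptually richer (it exhibits $D$ and $D_{p,q}$ as acting by the same ``scalar'' on $p$ and $q$) but longer.  Your step~(iii) also makes explicit the nonvanishing of $\delta_\varphi$ and the torsion-freeness of $W_2$, which the paper leaves implicit.
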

\begin{proof}
Let $\delta_\varphi=P_0\dd{}{x}+Q_0\dd{}{y}$. Then the polynomials $P_0$
and $Q_0$ are coprime by construction.
Let us take an arbitrary  derivation $D=P\dd{}{x}+Q\dd{}{y}$ from $\AW(\varphi)$.
We shall show that $D=h\delta_\varphi$ for some polynomial $h\in \k[x, y]$.

Consider the case when $\varphi$ possesses a polynomial generative function $p$.
Then by Lemma~\ref{lemma:centralizer as generating} we get $\AW(\varphi)=\AW(p)$.
 Therefore, $D(p)=0$ and by Lemma~\ref{lemma:integrating factor} we conclude
  that there exists a polynomial $h_0$ such that $\dd{p}{x}=h_0Q$
  and $\dd{p}{y}=-h_0P$. This means that $D_{p}=h_{0}D.$ By definition of $\delta_p$ there is a polynomial
  $h_{1}\in\k[x, y]$ such that $D_p=h_{1}\delta_p$ and therefore
  $h_{0}D-h_{1}\delta _{p}=0.$ Since $\delta _{p}$ is a reduced
  derivation, we have by Lemma \ref{lemma:reduced} that $D=h\delta _{p}$
  for some polynomial $h\in \k[x, y].$

Let us consider now the case when $\varphi$ does not have any
polynomial generative function. In this case
$\delta_\varphi=\delta_{p, q}$ for some irreducible and
algebraically independent polynomials $p$ and $q$ such that
$p/q$ is a generative rational function for $\varphi$.
By Lemma~\ref{lemma:centralizer as generating} $\AW(\varphi)=\AW(p/q)$.
Since $D_{p, q}(p/q)=0$, from the definition of $\delta_{p, q}$ it
follows that $\delta_{p, q}(p/q)=0$. Therefore,
$\delta_{p, q}$ belongs to $\AW(p/q)=\AW(\varphi)$.

Let $D=P\dd{}{x}+Q\dd{}{y}$ be an arbitrary non-zero derivation
from $\AW(\varphi)$.  Since  $D(\varphi)=0$ implies $D(p/q)=0$ and
since $D(p/q)=\frac{D(p)q-pD(q)}{q^2}$, we conclude
$D(p)q-pD(q)=0$. As the polynomials  $p$ and $q$ are coprime, we
obtain that $D(p)=\lambda p$ and $D(q)=\lambda q$ for some
$\lambda\in \k[x, y]$. Denote for convenience $\mu =\det J(p, q).$
Then by Lemma  \ref{lemma:Dpq} $h\delta _{p, q}(p)=D_{p, q}(p)=\mu
p$ and $h\delta _{p, q}(q)=D_{p, q}(q)=\mu q$. As the polynomials
$p$ and $q$ lie in the kernel of the derivation $\lambda h\delta
_{p, q}-\mu D$ we have by Lemma \ref{lemma:zero kernel} that
$\lambda h\delta _{p, q}-\mu D=0$. The derivation $\delta _{p, q}$
is reduced by construction, so by Lemma~\ref{lemma:reduced} we
obtain $D=h\delta _{p, q}$ for some polynomial $h.$
 We
proved that every derivation $D\in \AW(\varphi)$ is of the form
$h\delta_{\varphi}$ for some polynomial $h\in \k[x, y]$.
 This shows that $\AW(\varphi)$ is a free $\k[x, y]$-module
 of rank $1$ with the generator $\delta_{\varphi}$.
\end{proof}
\section{On centralizers of elements and maximal abeli\-an subalgebras of the Lie algebra $\AW(\varphi)$.}
\begin{df}
Let $p(x, y)$ be an irreducible polynomial. A polynomial $f(x, y)$
will be called $p$-free if $f(x, y)$ is not divisible by any
polynomial in $p(x, y)$ of positive degree.
\end{df}

It is clear that for every polynomial $g(x, y)$ there exists a
$p$-free polynomial $\bar g(x, y)$ such that $g=\bar g\cdot h(p)$
for some $h\in \k[t]$.  Note that $\bar g$ is determined by the
polynomial $g$ uniquely up to multiplication by a non-zero
constant.

\begin{lemma}\label{lemma:centralizers pfree}
Let $u\in \k(x, y)\setminus \k$ be a non-constant rational
function with  a polynomial generative function and $p(x, y)$ be
an irreducible  generative polynomial for $u(x, y)$.
 Then $C_{\AW(u)}(g\delta_p)=C_{\AW(u)}(\bar g \delta_p)$ for any $g\in
\k[x, y]$, where $\bar g$ is the $p$-free polynomial corresponding
to $g.$
\end{lemma}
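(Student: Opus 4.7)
My plan is to use Theorem~\ref{tr:free} to reduce the equality of centralizers to a single coefficient vanishing in $\k[x,y]$, and then to exploit the defining property $\delta_p(p)=0$.

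By Theorem~\ref{tr:free} every element of $\AW(u)$ can be written uniquely as $f\delta_p$ for some $f\in\k[x,y]$, so $f\delta_p$ lies in $C_{\AW(u)}(g\delta_p)$ if and only if $[f\delta_p,g\delta_p]=0$. The standard Leibniz identity $[fD,gD]=(fD(g)-gD(f))D$ gives
\[
[f\delta_p,g\delta_p]=\bigl(f\delta_p(g)-g\delta_p(f)\bigr)\delta_p,
\]
and since $\delta_p$ freely generates $\AW(u)$, this bracket vanishes iff the scalar coefficient $f\delta_p(g)-g\delta_p(f)$ vanishes in $\k[x,y]$.

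The crucial observation is that $\delta_p(p)=0$, which is immediate from the definition $\delta_p=-\frac{1}{h_0}\dd{p}{y}\dd{}{x}+\frac{1}{h_0}\dd{p}{x}\dd{}{y}$ with $h_0=\gcd(\dd{p}{x},\dd{p}{y})$. Writing $g=\bar g\cdot h(p)$ for some $h\in\k[t]$, the product and chain rules yield $\delta_p(g)=h(p)\delta_p(\bar g)$, since $\delta_p(h(p))=h'(p)\delta_p(p)=0$. Substituting, the coefficient factors as
\[
f\delta_p(g)-g\delta_p(f)=h(p)\bigl(f\delta_p(\bar g)-\bar g\delta_p(f)\bigr).
\]

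Since $\k[x,y]$ is an integral domain and $h(p)\neq 0$ whenever $g\neq 0$, the left-hand side vanishes iff $f\delta_p(\bar g)-\bar g\delta_p(f)=0$, i.e.\ iff $[f\delta_p,\bar g\delta_p]=0$. This yields $C_{\AW(u)}(g\delta_p)=C_{\AW(u)}(\bar g\delta_p)$; the case $g=0$ is trivial. I do not anticipate any real obstacle: the argument rests entirely on the built-in kernel property $\delta_p(p)=0$ together with the freeness statement of Theorem~\ref{tr:free}.
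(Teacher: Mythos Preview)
Your proof is correct and follows essentially the same approach as the paper: both arguments write an arbitrary element of $\AW(u)$ as $f\delta_p$ via Theorem~\ref{tr:free}, expand the bracket $[f\delta_p,\bar g\,h(p)\,\delta_p]$, and use $\delta_p(h(p))=0$ to factor out $h(p)$ and conclude. The only cosmetic difference is that the paper packages the computation as $[f\delta_p,h\bar g\,\delta_p]=f\delta_p(h)\bar g\,\delta_p+h[f\delta_p,\bar g\,\delta_p]$, whereas you expand the scalar coefficient directly.
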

\begin{proof}
Let $g={\bar g}h$, where $h\in \k[p]$. Take an arbitrary
derivation $D$ from $C_{\AW(u)}(g\delta_p)$. By
Theorem~\ref{tr:free} $D$ is of the form $D=f\delta _{p}$ for some
polynomial $f$. Since $[f\delta _{p}, g\delta _{p}]=0$, we have
$$0=[f\delta _{p}, g\delta _{p}]=[f\delta _{p}, h{\bar g}\delta
_{p}]=f\delta _{p}(h){\bar g}\delta _{p}+h[f\delta _{p}, {\bar
g}\delta _{p}].$$ Since $\delta _{p}(h)=0$ and $h\not= 0$, we
obtain from the last equalities that $D=f\delta _{p}\in
C_{\AW(u)}(\bar g \delta_p)$. Conversely, let $D \in
C_{\AW(u)}({\bar g}\delta_p)$. Using the same notations, write now
$D=f\delta _{p}$ for some $f\in \k[x, y]$. Then $[f\delta _{p},
{\bar g}\delta _{p}]=0.$ But then $$[f\delta _{p},  g\delta
_{p}]=[f\delta _{p}, {\bar g}h\delta _{p}]=f\delta _{p}(h){\bar
g}\delta _{p}+h[f\delta _{p}, {\bar g}\delta _{p}]=0.$$ Therefore,
the derivation $f\delta_p$ belongs to $C_{\AW(u)}(g\delta_p)$ if
and only if it belongs to $C_{\AW(u)}(\bar g\delta_p)$. This
proves the required statement.
\end{proof}
\begin{tr}\label{tr:centralizers poly}
Let $u\in \k(x, y)\setminus \k$ be a non-constant rational
function that
 possesses a polynomial generative function. Let $p(x, y)$ be an irreducible
 generative polynomial for $u(x, y)$. Then

(1) the centralizer of an arbitrary element $f\delta_{p}$ from
$\AW(u)$ in the Lie algebra $\AW(u)$ equals $\bar f
\k[p]\delta_p$, where $\bar f$ is a $p$-free polynomial
corresponding to $f$;

(2) maximal abelian subalgebras of $\AW(u)$ and only they are of
the form $\bar f \k[p]\delta_p$, where $\bar f$ is a $p$-free
polynomial.
\end{tr}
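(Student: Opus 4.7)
The plan is to exploit the $\k[x,y]$-module identification $\AW(u) = \k[x,y]\,\delta_p$ from Theorem~\ref{tr:free}, reducing every question about $\AW(u)$ to a question about the polynomial coefficients $f$ in $f\delta_p$. A direct computation gives the bracket formula
\[
[f\delta_p, g\delta_p] = \bigl(f\,\delta_p(g) - g\,\delta_p(f)\bigr)\delta_p,
\]
so $f\delta_p$ commutes with $g\delta_p$ if and only if $\delta_p(g/f) = 0$ in $\k(x,y)$. The key input I would use is that the kernel of $\delta_p$ acting on $\k(x,y)$ is $\k(p)$: since $\delta_p$ is proportional to $D_p$ and $D_p(\varphi) = \det J(p,\varphi)$, elements of $\ker\delta_p$ are algebraically dependent with $p$, and the closedness of the generative polynomial $p$ forces them to lie in $\k(p)$.

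For part (1), I would first replace $f$ by its $p$-free part $\bar f$ using Lemma~\ref{lemma:centralizers pfree}. The inclusion $\bar f\,\k[p]\,\delta_p \subseteq C_{\AW(u)}(\bar f\,\delta_p)$ is immediate from the bracket formula together with $\delta_p(p) = 0$. For the reverse inclusion, given $g\delta_p \in C_{\AW(u)}(\bar f\delta_p)$, the symmetry of commutation together with Lemma~\ref{lemma:centralizers pfree} applied now to $g$ reduces the task to showing that $[\bar f\delta_p, \bar g\delta_p] = 0$ forces $\bar g$ to be a scalar multiple of $\bar f$. The commutation gives $\bar g/\bar f \in \k(p)$, so $\bar g/\bar f = A(p)/B(p)$ with coprime $A,B\in\k[t]$. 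Bezout in $\k[t]$ pulled back through $p$ shows $\gcd(A(p), B(p)) = 1$ in $\k[x,y]$, and then the equation $B(p)\bar g = A(p)\bar f$ combined with the $p$-freeness of $\bar f$ and $\bar g$ forces both $A$ and $B$ to be constants. Writing $g = \bar g\,h(p)$ then yields $g \in \bar f\,\k[p]$.

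For part (2), I would first verify that $\bar f\,\k[p]\,\delta_p$ is abelian by another direct application of the bracket formula and $\delta_p(p) = 0$. Since by (1) this subalgebra equals $C_{\AW(u)}(\bar f\,\delta_p)$, any element of $\AW(u)$ that commutes with $\bar f\,\delta_p$ already lies inside it, so $\bar f\,\k[p]\,\delta_p$ is maximal abelian. Conversely, any maximal abelian subalgebra $M$ of $\AW(u)$ contains some nonzero $f\,\delta_p$; then by (1), $M \subseteq C_{\AW(u)}(f\,\delta_p) = \bar f\,\k[p]\,\delta_p$, and since the latter is itself abelian, maximality of $M$ gives equality.

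I expect the main technical step to be the passage from $\bar g/\bar f \in \k(p)$ to $\bar g = c\,\bar f$ with $c \in \k$: this is where the definition of $p$-freeness is essentially used, in combination with unique factorization in $\k[x,y]$ and Bezout inside $\k[p]\subseteq \k[x,y]$. The rest is essentially bookkeeping around the bracket formula $[fD, gD] = (fD(g) - gD(f))D$.
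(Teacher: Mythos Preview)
Your proposal is correct and follows essentially the same route as the paper: reduce to $p$-free coefficients via Lemma~\ref{lemma:centralizers pfree}, use the bracket formula to get $\delta_p(\bar g/\bar f)=0$, conclude $\bar g/\bar f\in\k(p)$ from closedness of $p$, and then use $p$-freeness to force $\bar g=c\bar f$. The only cosmetic difference is that you invoke Bezout in $\k[t]$ to get $\gcd(A(p),B(p))=1$ before using $p$-freeness, whereas the paper argues directly from the uniqueness (up to scalar) of the $p$-free factor in the equality $\bar g\,v(p)=\bar f\,u(p)$; part~(2) is handled identically.
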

\begin{proof}
(1) Since by Lemma~\ref{lemma:centralizers pfree} \(
C_{\AW(u)}(f\delta_p)=C_{\AW(u)}(\bar f \delta_p), \) we can
assume without loss of generality that $f=\bar f$.
 Take an arbitrary element
$g\delta_p$ from $C_{\AW(u)}(f\delta_p)$. Denote by $\bar g$ a
$p$-free polynomial corresponding to $g$. Then  $g=\bar g\cdot
h_0$, where $h_0=h_0(p)$ is a polynomial in $p$. By Lemma
\ref{lemma:centralizers pfree} it holds
 $[f\delta _{p},
{\bar g}\delta _{p}]=0$ and therefore $\delta _{p}(f)\bar
g-f\delta _{p}(\bar g)=0$. This relation yields the equality
$\delta _{p}(\bar g/f)=0$. As $D_{p}=\lambda \delta _{p}$ for some
polynomial $\lambda$, we have $D_{p}(\bar g/f)=0$. Since $D_p(\bar
g/f)=\det(J(p,\bar g/f))$, the latter implies that the rational
functions $p$ and $\bar g/f$ are algebraically dependent (see for
example~\cite{Hodge}, Ch.~III, \S~7, Th.~III or \cite{Petien2},
Lemma~1).
 As the
polynomial $p$  is closed (it is irreducible), the function $\bar
g/f$ belongs to the field $\k(p)$. This means that the rational
function $\bar g/f$ can be written in the form $\bar
g/f=u(p)/v(p)$ for some coprime polynomials $u, v\in \k[t]$. From
the last equality it follows $\bar gv(p)=fu(p)$. As the
polynomials $f$ and $\bar g$ are both $p$-free, we have ${\bar
g}=cf$ for some $c\in \k^{\times}$ (a $p$-free polynomial
corresponding to the polynomial $\bar gv(p)=fu(p)$ is determined
uniquely up to nonzero constant multiplier). Hence $g=h_{0}{\bar
g}=h_{0}cf$, where $h_{0}c\in \k[p].$
 We proved the inclusion $C_{\AW(u)}( f \delta_p)\subset
\k[p]\cdot f \delta_p$.

Since for every polynomial $r\in \k[t]$ we have
\[
[r(p) f\delta_p,   f \delta_p]= (r(p) f \delta_p( f)-
f\delta_p(r(p) f))\delta_p= (r(p) f \delta_p( f)- f r(p)\delta_p(
f))\delta_p=0,
\]
it holds also $\k[p]\cdot  f \delta_p\subset C_{\AW(u)}( f
\delta_p)$. We proved the equality $\k[p]\cdot  f \delta_p =
C_{\AW(u)}( f \delta_p)$ for $p$-free $f$. This proves the
first part of the theorem.

(2) Let $M$ be a maximal abelian subalgebra in $\AW(u)$ and let
$f\delta_p$ be an arbitrary non-zero element of $M$. Then
$M\subset C_{\AW(u)}(f\delta_p)$ and  by the part (1) of this
theorem $C_{\AW(u)}(f\delta_p)=\k[p]\cdot \bar f\delta_p$. Since
for arbitrary polynomials $F, G\in \k[t]$ it holds
\begin{align*}
[F(p)\bar f\delta_p, G(p)\bar f\delta_p]=
&(F(p)\bar f\delta_p(G(p)\bar f)-G(p)\bar f\delta_p(F(p)\bar f))\cdot\delta_p\\
=&(F(p)G(p)\bar f\delta_p(\bar f)-F(p)G(p)\bar f\delta_p(\bar f))\cdot\delta_p=0,
\end{align*}
one sees that  $\k[p]\cdot \bar f\delta_p$ is an abelian algebra.
The maximality of $M$ implies $M=\k[p]\cdot \bar f\delta_p$.
Conversely, the subalgebras of the form $\k[p]\cdot \bar
f\delta_p$ are abelian Lie algebras for any $p$-free polynomial
$\bar f$. As every element commuting with ${\bar f}\delta_p$
belongs by definition to the centralizer $C_{\AW(u)}({\bar
f}\delta_p)=\k[p]{\bar f}\delta $, one sees that all such
subalgebras are maximal abelian. This completes the proof of the
theorem.
\end{proof}
\begin{df}
Let $p$ and $q$ be algebraically independent irreducible
polynomials from the ring $\k[x, y]$.  A polynomial $f(x, y)\in
\k[x, y]$ will be called $p$-$q$-free if $f$ is not divisible by
any homogeneous polynomial in $p$ and $q$ of positive degree.
\end{df}
It is clear that for every polynomial $f$ there exists a
$p$-$q$-free polynomial $\bar f$ such that $f=\bar f h$ for some
homogeneous  in $p$ and $q$ polynomial $h$. We denote by $\k[p,
q]_{m}$ the vector space of all polynomials $f(p, q)$,  where
$f(x, y)$ is a homogeneous polynomial of degree $m$.
\begin{lemma}\label{lemma:different factors}
Let $p, q\in \k[x, y]$ be irreducible algebraically independent
polynomials, $\alpha _{1}, \alpha _{2}, \beta _{1}, \beta _{2}\in
\k$.  Then the polynomials $\alpha_1p+\beta_1q$ and
$\alpha_2p+\beta_2 q$ are either linearly dependent (over $\k$) or
coprime. In the first case $(\alpha_1:\beta_1)=(\alpha_2:\beta_2)$
as points in $\P_1(\k)$. In the second case
$(\alpha_1:\beta_1)\neq(\alpha_2:\beta_2)$.
\end{lemma}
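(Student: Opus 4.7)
The plan is to argue by inverting the linear system. If $(\alpha_1,\beta_1)$ and $(\alpha_2,\beta_2)$ are linearly dependent over $\k$ (and both nonzero, as required for the projective points to exist), then $(\alpha_2,\beta_2)=c(\alpha_1,\beta_1)$ for some $c\in\k^\times$, so the two polynomials differ by a scalar multiple and the two points in $\P_1(\k)$ coincide. That disposes of the first case.

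For the second case, I would assume the vectors are linearly independent, i.e.\ $\alpha_1\beta_2-\alpha_2\beta_1\neq 0$, and show that then the two polynomials must be coprime. The key observation is that the $2\times 2$ matrix with rows $(\alpha_i,\beta_i)$ is invertible over $\k$, so $p$ and $q$ themselves can be written as $\k$-linear combinations of $f_1=\alpha_1 p+\beta_1 q$ and $f_2=\alpha_2 p+\beta_2 q$. Consequently, any common irreducible factor $r$ of $f_1$ and $f_2$ divides both $p$ and $q$.

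The main (and only nontrivial) step is then to observe that $p$ and $q$ are coprime in $\k[x,y]$. Since both are irreducible, a common non-unit divisor would have to be associate to $p$ and to $q$ simultaneously, forcing $q=\lambda p$ for some $\lambda\in\k^\times$; but this contradicts the algebraic independence of $p$ and $q$. Therefore $\gcd(p,q)=1$, hence $\gcd(f_1,f_2)=1$, i.e.\ $f_1$ and $f_2$ are coprime; and the inequality $(\alpha_1:\beta_1)\neq(\alpha_2:\beta_2)$ in $\P_1(\k)$ is immediate from the linear independence of the coefficient vectors.

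I do not foresee any real obstacle here; the only subtlety is making explicit that irreducible algebraically independent polynomials are coprime, which uses the trivial fact that two associate polynomials are algebraically dependent.
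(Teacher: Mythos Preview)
Your argument is correct and follows essentially the same route as the paper: invert the $2\times 2$ coefficient matrix to write $p$ and $q$ as $\k$-linear combinations of $f_1$ and $f_2$, and conclude that any common divisor of $f_1,f_2$ divides the coprime pair $p,q$. You even spell out the one point the paper leaves implicit, namely why two irreducible algebraically independent polynomials must be coprime.
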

\begin{proof}
Since $p$ and $q$ are algebraically independent, they are also
linearly independent. If the  polynomials $r_1=\alpha_1p+\beta_1q$
and $r_2=\alpha_2p+\beta_2 q$ are
 linearly dependent then clearly $(\alpha_1:\beta_1)=(\alpha_2:\beta_2)$.
Let   now $r_{1}$ and $r_{2}$ be linearly independent. Then
$\det\smat{\alpha_1&\beta_1\\\alpha_2&\beta_2}\neq 0$. We can
write down $p=a_1r_1+b_1r_2$ and $q=a_2r_{1} +b_2 r_{2}$ for some
$a_1, a_2, b_1, b_2\in \k$. It is clear that any common divisor of
$r_1$ and $r_2$ is  a common divisor of $p$ and $q$. Since $p$ and
$q$ are coprime, we conclude that  $r_1$ and $r_2$ are coprime as
well. In this case $(\alpha_1:\beta_1)\not= (\alpha_2:\beta_2)$.
\end{proof}
\begin{tr}\label{tr:centralizers rational}
Let $\varphi$ be a rational function that does not possess a polynomial generative
function. Let $p$ and $q$ be algebraically independent and irreducible polynomials
such that $p/q$ is a generative rational function for $\varphi$. Then
\par
(1) for an arbitrary element $f\delta_{p, q}\in \AW(\varphi)$ its
centralizer in $\AW(\varphi)$ coincides with $\k[p, q]_m\cdot \bar f \delta_{p,
q}$, where $\bar f$
 is a $p$-$q$-free polynomial such that $f=\bar f \cdot h$ for some polynomial $h$ homogeneous of degree $m$ in
 $p$ and $q$;
\par
(2) every maximal abelian subalgebra from $\AW(\varphi)$ is of the
form $\k[p, q]_m\bar f\delta_{p, q}$ for some integer $m$ and
$p$-$q$-free polynomial $\bar f$. Every subalgebra of the type
$\k[p, q]_m\bar f\delta_{p, q}$ is maximal abelian. In particular
all maximal abelian subalgebras of $\AW(\varphi)$ are finite
dimensional.
\end{tr}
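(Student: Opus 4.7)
The plan is to leverage Theorem~\ref{tr:free}, which shows that every element of $\AW(\varphi)$ has the form $g\delta_{p,q}$ for some $g\in\k[x,y]$, and to analyse the commutation relation
\[
[g\delta_{p,q},f\delta_{p,q}]=\bigl(g\,\delta_{p,q}(f)-f\,\delta_{p,q}(g)\bigr)\delta_{p,q},
\]
which vanishes if and only if $\delta_{p,q}(g/f)=0$ in $\k(x,y)$. Thus part~(1) reduces to describing all polynomials $g$ with this property, and part~(2) will follow formally.

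First I would identify $\ker\delta_{p,q}\cap\k(x,y)$. By Lemma~\ref{lemma:Dpq}(2), $D_{p,q}(p)=\mu p$ and $D_{p,q}(q)=\mu q$ with $\mu=\det J(p,q)$, giving $\delta_{p,q}(p/q)=0$. Since $p/q$ is closed (because $\varphi$ has no polynomial generative function) and the kernel of a non-zero derivation of $\k(x,y)$ is algebraically closed in $\k(x,y)$ by the Nowicki lemma invoked in the proof of Lemma~\ref{lemma:zero kernel}, one gets $\ker\delta_{p,q}\cap\k(x,y)=\k(p/q)$. Hence $g/f=F(p/q)$ for some $F=U/V\in\k(t)$ with $\gcd(U,V)=1$; homogenising $U,V$ to polynomials $\tilde U,\tilde V\in\k[p,q]_n$ of common degree $n=\max(\deg U,\deg V)$ and using Lemma~\ref{lemma:different factors} to confirm that $\tilde U,\tilde V$ remain coprime in $\k[x,y]$, the centralizer condition rewrites as the polynomial identity
\[
g\,\tilde V(p,q)=\bar f\,h\,\tilde U(p,q).
\]

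The main obstacle is the divisibility step: to deduce that $g=\bar f\cdot H$ for some $H\in\k[p,q]_m$. The approach I would take is to factor $\tilde V=\prod L_i^{e_i}$ in $\k[p,q]$ into linear forms $L_i=\alpha_i p+\beta_i q$ (using that $\k$ is algebraically closed), whose $\k[x,y]$-factorisations are pairwise disjoint by Lemma~\ref{lemma:different factors}. Combining the coprimality of $\tilde U,\tilde V$ with the $p$-$q$-freeness of $\bar f$ (which forbids any $\alpha p+\beta q$ from dividing $\bar f$), a careful multiplicity count at each irreducible factor of each $L_i$ in $\k[x,y]$ should force $L_i^{e_i}\mid h$ in $\k[p,q]$ for every $i$. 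Hence $\tilde V\mid h$ in the graded ring $\k[p,q]$, whence $h/\tilde V\in\k[p,q]_{m-n}$ and $H\defeq\tilde U\cdot(h/\tilde V)\in\k[p,q]_m$ with $g=\bar f H$. The subtle point here is translating divisibility in $\k[x,y]$ back to divisibility in $\k[p,q]$, for which the $p$-$q$-free condition on $\bar f$ is essential.

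For the reverse inclusion in~(1) and all of~(2), Lemma~\ref{lemma:Dpq}(3) shows that $\delta_{p,q}$ acts on the vector space $\k[p,q]_m$ as multiplication by a common scalar in $\k(x,y)$. Hence for any $H_1,H_2\in\k[p,q]_m$ the bracket $[H_1\bar f\delta_{p,q},H_2\bar f\delta_{p,q}]$ vanishes, giving both the reverse inclusion in~(1) and the fact that $\k[p,q]_m\bar f\delta_{p,q}$ is abelian. For~(2), any maximal abelian subalgebra $M$ contains some non-zero $f\delta_{p,q}$, so $M\subseteq\k[p,q]_m\bar f\delta_{p,q}$ by part~(1), and maximality forces equality; conversely, applying part~(1) to any non-zero element of $\k[p,q]_m\bar f\delta_{p,q}$, whose $p$-$q$-free part is still $\bar f$ up to a constant with the same degree $m$, shows its centralizer equals $\k[p,q]_m\bar f\delta_{p,q}$, making the latter maximal abelian. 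Finite-dimensionality follows from $\dim_\k\k[p,q]_m=m+1$.
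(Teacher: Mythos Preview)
Your proposal is correct and follows essentially the same route as the paper: reduce to $\delta_{p,q}(g/f)=0$, conclude $g/f\in\k(p/q)$, factor into linear forms in $p,q$, and use Lemma~\ref{lemma:different factors} together with $p$-$q$-freeness of $\bar f$ to force $g\in\k[p,q]_m\bar f$; the reverse inclusion and part~(2) are handled identically via Lemma~\ref{lemma:Dpq}(3). The only cosmetic difference is that you invoke Nowicki's lemma on algebraically closed kernels to get $\ker\delta_{p,q}=\k(p/q)$, whereas the paper uses the Jacobian criterion $D_{p,q}(f/g)=q^2\det J(p/q,f/g)$ and closedness of $p/q$; both are standard and yield the same conclusion.
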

\begin{proof}
First of all note that one can choose for $\varphi$  a generative
rational function of the form $p/q$ with irreducible and
algebraically independent polynomials
 $p$ and $q$ by Corollary~1 from~\cite{Petien2}.

(1) By Lemma~\ref{lemma:centralizer as generating}
$\AW(\varphi)=\AW(p/q)$.
 By Theorem~\ref{tr:free} every derivation from $\AW(\varphi)$ may be written
 as $f\delta_{p, q}$ for some polynomial $f\in k[x, y]$.

Take an arbitrary element $g\delta_{p, q}\in
C_{\AW(\varphi)}(f\delta_{p, q})$. Then $[g\delta_{p, q},
f\delta_{p, q}]=0$ and hence $g\delta_{p, q}(f)-f\delta_{p,
q}(g)=0$. Therefore, $\delta_{p, q}(f/g)=0$ and also $D_{p,
q}(f/g)=0$ (recall that $D_{p, q}=\lambda \delta _{p, q}$ for some
$\lambda \in \k[x, y]$). Since $D_{p, q}(f/g)=\det(J(p/q,f/g))$,
the latter implies that the rational functions $p/q$ and $f/g$ are
algebraically dependent (see for example~\cite{Hodge}, Ch.III, \S
7, Th.~III or \cite{Petien2}, Lemma~1). As $p/q$ is a closed
rational function, we see that $f/g\in \k(p/q)$.  Therefore,
\[
\frac{f}{g}=\frac{F(\frac{p}{q})}{G(\frac{p}{q})}
\]
for some coprime polynomials $F, G\in \k[t]$.

By our assumption the  field $\k$ is algebraically closed, so we
can decompose $F$ and $G$ into linear factors, say
\[
F(t)=c_1(t-\lambda_1)\dots (t-\lambda_k),\quad
G(t)=c_2(t-\mu_1)\dots(t-\mu_l),\quad c_1, c_2, \lambda_i,\mu_j\in \k.
\]
Then
\[
\frac{f}{g}=\frac{F(\frac{p}{q})}{G(\frac{p}{q})}=
\frac{c_1(p-\lambda_1q)\dots (p-\lambda_kq)}
{c_2(p-\mu_1q)\dots(p-\mu_lq)}\cdot q^{l-k}
\]
and hence
\[
g=  \frac{c_2(p-\mu_1q)\dots(p-\mu_lq)}{c_1(p-\lambda_1q)\dots (p-\lambda_kq)}\cdot q^{k-l}\cdot f.
\]
Write $f=\bar f \cdot h$ for a $p$-$q$-free polynomial $\bar f$
and a  polynomial $h$ homogeneous of degree $m$ in $p$ and $q$. It
is known that the polynomial  $h$ can be decomposed into the
product
\[
h=(\alpha_1p-\beta_1 q)\dots(\alpha_m p-\beta_m q).
\]
with some $\alpha _{i}, \beta _{j}\in \k$. As $f=\bar fh$, we can
finally write
\[
g=  \frac{c_2(p-\mu_1q)\dots(p-\mu_lq)(\alpha_1p-\beta_1 q)\dots(\alpha_m p-\beta_m q)}
{c_1(p-\lambda_1q)\dots (p-\lambda_kq)}\cdot q^{k-l}\cdot \bar f.
\]
The polynomial  $\bar f$ is not divisible by homogeneous
nonconstant  polynomials in $p$ and $q$, so using
Lemma~\ref{lemma:different factors} we conclude that the rational
function
\[
h_1=\frac{c_2(p-\mu_1q)\dots(p-\mu_lq)(\alpha_1p-\beta_1 q)\dots(\alpha_m p-\beta_m q)}
{c_1(p-\lambda_1q)\dots (p-\lambda_kq)}\cdot q^{k-l}
\]
must be a polynomial, i.~e., all factors of its denominator must
occur as factors in the numerator. It is obvious  that $h_1$ is a
homogeneous polynomial of degree $l+m+(k-l)-k=m$ in $p$ and $q.$

We proved that $g=h_1\cdot \bar f$,
hence $g\delta_{p, q}=h_1\cdot \bar f\delta_{p, q}\in \k[p, q]_m\cdot \bar f\delta_{p, q}$.
Therefore, $C_{\AW(\varphi)}(f\delta_{p, q})\subset \k[p, q]_m\cdot \bar f\delta_{p, q}$.

For every polynomial $h_2$ homogeneous   of degree $m$ in $p$ and
$q$   applying Lemma~\ref{lemma:Dpq}, 3), one obtains
\begin{align*}
[h_2\bar f\delta_{p, q}, \  f\delta_{p, q}]&= [h_2\bar f\delta_{p,
q}, \ h{\bar f}\delta_{p, q}]=({\bar f}\delta _{p,
q}(h_{2})h-h_{2}{\bar f}\delta _{p, q}(h)){\bar f}\delta _{p, q}=\\
&=(m\det J(p, q)h_{2}h-h_{2}m\det J(p, q)h){\bar f}^{2}\delta _{p,
q}=0.
\end{align*}

Therefore, $\k[p, q]_m\cdot \bar f\delta_{p, q}\subset C_{\AW(\varphi)}(f\delta_{p, q})$
and we obtain the equality
\[
C_{\AW(\varphi)}(f\delta_{p, q})=\k[p, q]_m\cdot \bar f\delta_{p, q}.
\]
This proves the first part of the theorem.

(2) This part can be proved  similarly to the  part 2) of
Theorem~\ref{tr:centralizers poly} by replacing the set $\k[p]$ by
$\k[p, q]_m$.
\end{proof}

\section*{Acknowledgements}
The research of the second author was partially supported by DFFD,
Grant F28.1/026.

\def\cprime{$'$} \def\cprime{$'$}


\begin{thebibliography}{1}

\bibitem{Bodin}
A.~Bodin,  Reducibility of rational functions in several
variables, {\it Israel J. Math.}, 164 (2008), 333--347.

\bibitem{Buch}
V.~M.~ Buchstaber and D.~V.~ Leykin,  Polynomial {L}ie algebras,
{\it Functional Analysis and its Applications}, 36(4) (2002),
267--280.

\bibitem{Hodge}
W. V. D. Hodge and D. Pedoe, Methods of algebraic geometry. Vol.
I. Reprint of the 1947 original. {\it Cambridge Mathematical
Library. Cambridge University Press}, Cambridge, 1994.

\bibitem{Jordan1}
D.~A.~Jordan, On the ideals of a {L}ie algebra of derivations,
{\it J. London Math. Soc.}, 33(1) (1986), 33--39.

\bibitem{Now}
A.~Nowicki and M.~Nagata,  Rings of constants for
{$k$}-derivations in {$k[x\sb 1,\dots,x\sb
  n]$},  {\it J. Math. Kyoto Univ.}, 28(1) (1988),  111--118.

\bibitem{Oll}
J.~M.~Ollagnier,  Algebraic closure of a rational function,  {\it
Qualitative theory of dynamical systems}, 5(2) (2004), 285--300.

\bibitem{Petien}
A.~P.~Petravchuk and O.~G.~Iena,  On centralizers of elements in
the {L}ie algebra of the special
  {C}remona group {${\rm SA}\sb 2(k)$}, {\it J. Lie Theory}, 16(3) (2006), 561--567.

\bibitem{Petien2}
A.~P.~Petravchuk and O.~G.~Iena,  On closed rational functions in
several variables, {\it Algebra Discrete Math.}, (2) (2007),
115--124.

\end{thebibliography}

\end{document}